\documentclass[10pt,fleqn]{article}
\usepackage{array,delarray,calc,amssymb,amsmath,amsthm,mathrsfs,graphicx}

\let\a=\alpha

\let\c=\chi

\let\e=\varepsilon

\let\f=\varphi
\let\F=\Phi
\let\g=\gamma
\let\G=\Gamma

\let\l=\lambda

\let\m=\mu

\let\N=\nabla

\let\r=\rho
\let\s=\sigma
\let\S=\Sigma

\let\vp=\varpi

\let\W=\Omega
\let\x=\xi
\let\X=\Xi

\let\y=\psi
\let\Y=\Psi

\def\cL{{\cal L}}

\def\cN{{\cal N}}

\def\gg{{\mathfrak{g}}}

\def\gso{{\mathfrak{so}}}

\def\s1{S^{n-1}}

\let\os=\oplus

\def\endo{\operatorname{End}}
\def\eye{\sqrt{-1}}

\def\id{\operatorname{id}}

\def\proj{\operatorname{Proj}}

\def\spin{\operatorname{Spin}}

\def\dc{\mathpalette\dC{}}
\def\dC#1{\ooalign{$\hfil\raisebox{0.2ex}{$#1\mkern1mu/$}\hfil$\cr$#1\nabla$}}

\let\ptl=\partial
\def\pt{\partial_t}

\def\spm{\begin{pmatrix}\frac{\X}{\eye}\f\\\f\end{pmatrix}}
\def\spfy{\begin{pmatrix}\y\\\f\end{pmatrix}}
\def\Lt{\mathcal{L}_{_T}}

\def\sideremark#1{\ifvmode\leavevmode\fi%
 \vadjust{%
  \vbox to0pt{%
   \vss\hbox to 0pt{%
    \hskip\hsize\hskip1em\vbox{%
     \hsize2cm\tiny\raggedright\pretolerance10000\noindent #1\hfill%
    }%
    \hss%
   }%
   \vbox to\baselineskip{\vfil}\vss%
  }%
 }%
}

\newtheorem{theorem}{Theorem}[section]
\newtheorem{lemma}[theorem]{Lemma}
\newtheorem{remark}[theorem]{Remark}

\title{Powers of the Dirac Operator on $S^1\times S^{n-1}$}
\author{Doojin Hong}
\date{\today}

\begin{document}
\maketitle
\begin{abstract}
We give explicit formulas for the spectra of the intertwinors on the spinor bundle over $S^1 \times S^{n-1}$, $n$ even, with the standard Lorentzian metric $g=-g_{{}_{S^1}}+g_{{}_{S^{n-1}}}$. As a special case, we construct conformally covariant differential operators of all odd orders with the leading term a power of the Dirac operator.
\end{abstract}
\section{Introduction}
In \cite{BOO:96}, Branson, \'Olafsson and \O rsted introduced spectrum generating technique. The method has been successfully applied to some multiplicity one and two cases (\cite{BH:06}, \cite{BOO:96}, \cite{Hong:11}). Branson and \O rsted used the method to construct conformally covariant differential operators of all odd orders on the spinor bundle over the standard sphere $S^n$ in \cite{BO:06}.  
In this paper, we apply the technique to the spinor bundle over $S^1\times S^{n-1}$ and present the spectral function for the intertwinors and construct conformally invariant differential operators of all odd orders with the leading term a power of the Dirac operator. 
\section{Spinors on $S^1\times S^{n-1}$}
Let $\S$ be the spinor bundle on the odd dimensional ($n$ even) sphere $\s1$ with the standard spin structure. The fundamental tensor-spinor $\g$ is a section of $T^*\s1\otimes\endo(\S)$ satisfying  the Clifford relation
\begin{equation}\label{Cliff}
\g_i\g_j+\g_j\g_i=-2g_{ij}\id_\S, 
\end{equation}
where $g$ is the metric tensor on $\s1$. The Dirac operator $D$ can be expressed as $D=\g^i\N_i$.
The highest weight of an irreducible, finite-dimensional representation of $\spin(n-1)$ can be expressed in terms of the fundamental weights (\cite{Branson:92})
\begin{equation*}
\l=(\l_1,\cdots,\l_l),\text{ where }\l\in\mathbb{Z}^l\cup (\tfrac12+\mathbb{Z})^l, \l_1\ge\cdots\ge\l_l\ge 0,\text{ and }l=\tfrac{n-2}{2} .
\end{equation*} 
The spinor bundle, for example, is the bundle associated with the representation with the highest weight $\l=(\tfrac12,\cdots\tfrac12)$.
Let $\G(\l)$ be the space of $\spin(n)$-finite sections of the spinor bundle. Then, by Frobenius reciprocity (\cite{Branson:92}), $\G(\l)$ is the direct sum of the irreducible $\spin(n)$ representations
\begin{equation*}
\G(\l)=\bigoplus_{j=0}^\infty V(\tfrac12+j,\tfrac12,\cdots,\tfrac12,\tfrac{\e}{2})=:\bigoplus_{j=0}^\infty V(j,\e),\; \e=\pm 1.
\end{equation*}
The Dirac operator $D$ acts as $\e\cdot J$ on $V(j,\e)$, where $J=\tfrac{n-1}{2}+j$ (\cite{Branson:92}).
\\
On $S^1$, we are interested in the nonstandard spin structure. The $\spin(2)$-finite sections $e^{\sqrt{-1}ft}$ of the Dirac operator $\pt$ are $4\pi$-periodic ($f\in \tfrac12+\mathbb{Z}$). We will use the notation $(f)$ for this type of $\spin(2)$-finite space. So, the Dirac operator acts as $\sqrt{-1}f$ on $(f)$ over $S^1$. 
\\
Now, on $M=S^1\times S^{n-1}$, a spinor can be viewed as a pair of $t$ ($S^1$ parameter) dependent spinors on $\s1$ (\cite{BH:06}). We define
\begin{equation*}
\a_0=\begin{pmatrix}0&1\\1&0\end{pmatrix},\quad \a_i=\begin{pmatrix}\g_i&0\\0&-\g_i\end{pmatrix}\text{ for }i=1,\dots,n-1.
\end{equation*} 
Using the Clifford relation (\ref{Cliff}) on $\s1$ , it is easy to check that, on the spinor bundle $\S_M$ on $M$,
\begin{equation*}
\a_a\a_b+\a_b\a_a=-2g_{ab}\id_{\S_M},
\end{equation*}
where $g$ is the metric tensor on $M$.
\\
We can choose a complex number $u$ so that the chirality operator on $\s1$, $\c_{_S}=u\g_1\cdots\g_{n-1}$ becomes the identity. That is, $\c_{_S}=\id_\S$ on $\s1$. We define the chirality operator on $M$ as $\c_{_M}=\eye\a_0\c_{_S}$ so that $\c_{_M}^2=\id_{\S_M}$. Since $n$ is even, we have the chirality splitting $\S_M=\S_M^{+1}\os\S_M^{-1}$ of the spinors on $M$ into $\pm 1$ eigenspinors of $\c_{_M}$.
\begin{equation*}
\c_{_M}\spfy=\pm\spfy\implies \spfy=\spm\text{ for }\X=\pm 1.
\end{equation*}
Then the Dirac operator on $M$ is
\begin{equation}\label{dirac}
\dc=\a^a\nabla_a=\ptl_t\begin{pmatrix}0&1\\1&0\end{pmatrix}+D\begin{pmatrix}1&0\\0&-1\end{pmatrix}.
\end{equation}
Note that $\pt$ and $D=\g^i\N_i$ are Dirac operators on $S^1$ and $\s1$, respectively. The Dirac operator $\dc$ changes chirality. That is, $\dc$ maps $\X$-spinors to $-\X$-spinors.
\begin{theorem}
The square of the Dirac operator acts as a constant
\begin{align}
\dc^2\Phi&=(\ptl^2_t+D^2)\id_{\S_M^\X}\Phi=(-f^2+J^2)\Phi\\
& \text{ on }V(f,j,\X,\e)=(f)\otimes V(j,\e)\text{ type }K-\text{finite }\X-\text{spinors},\label{dirac2}
\end{align} 
where $J=\tfrac{n-1}{2}+j$ and $V(f,j,\X,\e)$ is the $K=\spin(2)\times\spin(n)$-finite space consisting of $\Phi=e^{\sqrt{-1}ft}\spm$ for $\f\in V(j,\e)$.
\end{theorem}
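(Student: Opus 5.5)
The plan is to square the matrix expression (\ref{dirac}) for $\dc$ directly and then evaluate the result on the explicit $K$-finite vectors. Write $A=\begin{pmatrix}0&1\\1&0\end{pmatrix}$ and $B=\begin{pmatrix}1&0\\0&-1\end{pmatrix}$, so that $\dc=\pt A+DB$. Then
\begin{equation*}
\dc^2=\pt^2A^2+\pt D\,(AB+BA)+D^2B^2 .
\end{equation*}
Three observations simplify this.
\begin{itemize}
\item[(i)] $\pt$ and $D$ commute, since they act on different factors of the product $S^1\times\s1$.
\item[(ii)] $A^2=B^2=I$.
\item[(iii)] $AB+BA=0$.
\end{itemize}
With these, the cross term vanishes and $\dc^2=(\pt^2+D^2)\id_{\S_M}$. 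In particular $\dc^2$ is diagonal and preserves each chirality summand $\S_M^\X$. This is consistent with $\dc$ reversing chirality, since then $\dc^2$ maps $\X$-spinors back to $\X$-spinors.

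Next I would apply this to $\Phi=e^{\eye ft}\spm$ with $\f\in V(j,\e)$. The operator $\pt$ acts on the factor $e^{\eye ft}$ by $\eye f$, so $\pt^2$ acts by $-f^2$. The operator $D$ acts on $\f$ by $\e J$, and the constant scalar $\X/\eye$ in the top entry commutes with $D$. Hence $D^2$ acts componentwise by $\e^2J^2=J^2$, and $\dc^2\Phi=(-f^2+J^2)\Phi$.

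The only point needing care is step (iii), that the $\pt D$ cross term really vanishes. This relies on $AB+BA=0$ together with (i), namely that $\pt$ and $D$ commute as operators on $t$-dependent spinors on $\s1$. The second fact holds because the connection on $M$ is the product connection. I would also check that $\Phi$ is genuinely an $\X$-spinor, i.e.\ that $\c_{_M}\Phi=\X\Phi$, using $\c_{_S}=\id$ and $\X^2=1$. Given the displayed description of $\S_M^{\pm1}$ earlier in the paper, this is immediate, so no real obstacle is expected.
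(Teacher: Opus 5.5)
Your argument is correct and is the same computation the paper leaves implicit; the paper states the theorem right after (\ref{dirac}) with no separate proof. Squaring (\ref{dirac}), the cross term drops out because the two constant matrices anticommute and $\pt$ commutes with $D$, after which $\pt^2$ and $D^2$ act by $-f^2$ and $\e^2J^2=J^2$; the chirality check you mention is not actually needed, since $\dc^2=(\pt^2+D^2)\id$ acts diagonally and the eigenvalue computation works for any pair of components in $(f)\otimes V(j,\e)$.
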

\section{Intertwining relations}
Let $(\x_{-1},\x_0,\x_1,\dots\x_n)$ be the homogeneous coordinates of $M$ with $\x_{-1}=\sin t$ and $\x_0=\cos t$. And let $\x_1=\cos\r$ and complete $\r$ to a spherical angular coordinates on $\s1$. 
Let $w_0:=\cos t$, $w_1:=\cos\r$, $Y_0:=\sin t\partial t$ and $Y_1:=\sin\r\partial\r$. Then, $Y_0$ and $Y_1$ are conformal vector fields with conformal factors $w_0$ and $w_1$ on $S^1$ and $S^{n-1}$, respectively (\cite{Branson:87}). That is, with respect to the Lie derivative $\mathcal{L}$ on $S^1$ and $\s1$, 
\begin{equation*}
\mathcal{L}_{_{Y_0}} g_{_{S^1}}=2w_0g_{_{S^1}}\text{ and } \mathcal{L}_{_{Y_1}} g_{_{\s1}}=2w_1g_{_{\s1}}.
\end{equation*}
We define
\begin{equation*}
T:=w_1Y_0+w_0Y_1 \text{ and }\vp:=w_0w_1.
\end{equation*}
Then $\Lt g=2\vp g$, so $T$ is a conformal vector field on $M$ with conformal factor $\vp$.
\\
Let $A$ be an intertwinor of order $2r$ on the spinor bundle satisfying the intertwining relation (\cite{Branson:87}).
\begin{equation}\label{int-rel}
A\left(\Lt+\left(\frac{n}{2}-r\right)\vp\right)=\left(\Lt+\left(\frac{n}{2}+r\right)\vp\right)A\, .
\end{equation}
The Dirac operator $\dc$, for example, is an intertwinor of order $1$.
Note that both $A$ and $\a^0$ exchange the chirality of spinors. So we consider two invariant operators and their intertwining relations for $\Xi=\pm 1$.
\begin{equation*}
\a^0A:\S_M^\Xi\to\S_M^\Xi\text{ and }A\a^0:\S_M^\Xi\to\S_M^\Xi.
\end{equation*}
\begin{lemma}\label{comp-rel}The intertwining relations for $\a^0A$ and $A\a^0$ are
\begin{align}
\a^0A([N,\vp]+P-2r\vp)=([N,\vp]-P+2r\vp)\a^0A,\label{1st}\\
A\a^0([N,\vp]-P-2r\vp)=([N,\vp]+P+2r\vp)A\a^0,\label{2nd}
\end{align}
where
$P:=\sin t\sin\r\a^0\a^1$, $N:=-\pt^2+\N^*\N_{\s1}$, Riemannain $\N^*\N$ on M and $[,]$ is the commutator.
\end{lemma}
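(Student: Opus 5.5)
The plan is to rewrite the spinor Lie derivative $\Lt$ in terms of $[N,\vp]$, $\vp$ and $P$. Substituting this into (\ref{int-rel}) and moving $\a^0$ across will then give (\ref{1st}) and (\ref{2nd}).

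\emph{Step 1: $[N,\vp]$ versus $\N_T$.} For a function $w$ one has $[\N^*\N,w]=(\N^*\N w)-2\N_{\operatorname{grad}w}$. Apply this separately to the two factors of $N=-\pt^2+\N^*\N_{\s1}$ with $\vp=\cos t\cos\r$:
\[
[-\pt^2,\vp]=\cos t\cos\r+2\sin t\cos\r\,\pt ,
\]
\[
[\N^*\N_{\s1},\vp]=(n-1)\vp+2\cos t\sin\r\,\N_{\ptl_\r}.
\]
These use $\Delta_{\s1}\cos\r=-(n-1)\cos\r$ and $\operatorname{grad}_{\s1}\vp=-\cos t\sin\r\,\ptl_\r$. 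Adding them gives
\[
[N,\vp]=2\N_T+n\vp ,
\]
which is why the Lorentzian sign of $-\pt^2$ is needed: it makes the two pieces recombine into exactly $T=w_1Y_0+w_0Y_1$.

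\emph{Step 2: the Kosmann formula.} For the conformal field $T$ with factor $\vp$, the Lie derivative on spinors is
\[
\Lt=\N_T+\tfrac14\N_{[a}T_{b]}\,\a^a\a^b .
\]
This is the step I expect to be the main obstacle. One must compute the antisymmetric part of $\N T$ for $T=\cos\r\sin t\,\pt+\cos t\sin\r\,\ptl_\r$, lowering indices with the Lorentzian metric. The result should be proportional to $\sin t\sin\r\,dt\wedge d\r$, and one has to track the signs and the factor coming from $g_{00}=-1$, so that the Clifford term is exactly $\tfrac12 P$. Given this, $2\Lt=[N,\vp]-n\vp+P$. Substituting into (\ref{int-rel}) and multiplying by $2$ yields
\[
A([N,\vp]+P-2r\vp)=([N,\vp]+P+2r\vp)A .
\]
If the computation produces $-P$ instead, the argument below is unchanged up to the labelling of (\ref{1st}) and (\ref{2nd}).

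\emph{Step 3: moving $\a^0$ across.} $N$ acts componentwise on the pair of spinors on $\s1$, so it commutes with $\a^0$; $\vp$ is scalar and also commutes with $\a^0$. Since $\a^0\a^1=-\a^1\a^0$, we get $\a^0P=-P\a^0$. Left-multiplying the relation of Step 2 by $\a^0$ and commuting $\a^0$ through the right-hand factor gives (\ref{1st}).

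For (\ref{2nd}), use $(\a^0)^2=\id$ (from $g_{00}=-1$ and the Clifford relation, or directly from the matrix) to write $A=A\a^0\a^0$. Then $\a^0([N,\vp]+P-2r\vp)=([N,\vp]-P-2r\vp)\a^0$. Right-multiplying the whole relation by $\a^0$ and using $\a^0P\a^0=-P$ on the right-hand side gives
\[
A\a^0([N,\vp]-P-2r\vp)=([N,\vp]+P+2r\vp)A\a^0,
\]
which is (\ref{2nd}).
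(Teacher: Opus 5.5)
Your Steps 1 and 3 are correct, and Step 3 takes a slightly different route from the paper. The paper obtains $[\Lt,\a^0]=P\a^0$ by differentiating the section $\a^0=\a(dt)$, using $\Lt\a=-\vp\a$ and $\Lt dt=\vp\,dt-\sin t\sin\r\,d\r$. You instead get the same commutator directly from $[\N_T,\a^0]=0$ and $\a^0P=-P\a^0$, which is cleaner; Step 1 also derives (\ref{l3.1.3}) directly rather than citing it.

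\textbf{The gap is in Step 2.} That step is the one place where the content of the lemma is decided, namely which of $\a^0A$, $A\a^0$ satisfies which relation. There you neither carry out the computation nor write the correct formula. With the paper's convention $\a_a\a_b+\a_b\a_a=-2g_{ab}$, Kosmann's formula is
$\Lt=\N_T-\tfrac18(\N_aT_b-\N_bT_a)\a^a\a^b$.
The sign is fixed by the Killing case, where one needs $[\mathcal{L}_X-\N_X,\a(Y)]=-\a(\N_YX)$ together with $[\a^a\a^b,\a_c]=2\delta^a_c\a^b-2\delta^b_c\a^a$. Your $+\tfrac14\N_{[a}T_{b]}\a^a\a^b$ is the sign belonging to the $+2g_{ab}$ convention. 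Concretely, $T_\flat=-\sin t\cos\r\,dt+\cos t\sin\r\,d\r$, so $dT_\flat=-2\sin t\sin\r\,dt\wedge d\r$. The correct formula then gives $\Lt-\N_T=\tfrac12\sin t\sin\r\,\a^0\a^1=\tfrac12P$, whereas yours gives $-\tfrac12P$.

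\textbf{Your fallback does not work.} You say that if the computation produces $-P$, the argument is ``unchanged up to the labelling''. But with $-P$, Step 3 yields $\a^0A([N,\vp]-P-2r\vp)=([N,\vp]+P+2r\vp)\a^0A$. That attaches (\ref{2nd}) to $\a^0A$ and (\ref{1st}) to $A\a^0$, which is the opposite statement, and it is false.

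Test it on $A=\dc$ with $r=\tfrac12$. By (\ref{LRD}), $\a^0\dc$ acts by $\eye(f-\X\e J)$, so its eigenvalue ratio from $V_O$ to $V_E$ is $\tfrac{f+1+\X\e J}{f-\X\e J}$. This is exactly what (\ref{1st}) predicts via (\ref{proj1}) and (\ref{proj2}). The relation (\ref{2nd}) would instead predict $\tfrac{f+1-\X\e J}{f+\X\e J}$. This asymmetry is precisely what later separates $Z_L$ from $Z_R$, so the labelling is not cosmetic.

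\textbf{How to fix it.} Either use the correctly signed Kosmann formula above, or stay within your own framework as follows.
\begin{itemize}
\item Compute independently $\Lt\a^0=(\Lt\a)(dt)+\a(\Lt dt)=-\sin t\sin\r\,\a^1=P\a^0$.
\item The Clifford part of $\Lt$ is some multiple $cP$, since it is proportional to $dT_\flat$.
\item You already have $[\N_T,\a^0]=0$, and $[cP,\a^0]=2cP\a^0$.
\item Matching $2cP\a^0=P\a^0$ forces $c=\tfrac12$.
\end{itemize}
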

\begin{proof} We note that
\begin{equation}\label{l3.1.1}
\begin{split}
\Lt \a^0&=(\Lt\a)(dt)+\a(\Lt dt)=-\vp\a^0+\vp\a^0-\sin t\sin\r\a^1\\
&=-\sin t\sin\r\a^1=\sin t\sin\r\a^0\a^1\a^0=P\a^0,
\end{split}
\end{equation}
\begin{equation}\label{l3.1.2}
\begin{split}
\Lt-\nabla_T&=-\tfrac{1}{8}\left(\nabla_a T_b-\N_b T_a\right)\a^a\a^b\text{ by Kosmann \cite{Kosmann:72}}\\
&=-\tfrac{1}{8}(dT_\flat)_{ab}\a^a\a^b=-\tfrac{1}{2}\sin t\sin\r \a^1\a^0=\tfrac12 P,\\
\end{split}
\end{equation}
\begin{equation}\label{l3.1.3}
\begin{split}
\nabla_T+\tfrac{n}{2}\vp&=w_1Y_0+\tfrac12\vp+w_0\left(\N_{Y_1}+\tfrac{n-1}2w_1\right)\\
&=\tfrac12[-\pt^2,\vp]+\tfrac12{w_0}[\N^*\N_{\s1},w_1]=\tfrac12[N,\vp],\\
&\text{ since }\N_{Y_1}+\tfrac{n-1}2w_1=\tfrac12[\N^*\N_{\s1},w_1] \text{ (\cite{Hong:23})}
\end{split}
\end{equation}
and by (\ref{int-rel}) and (\ref{l3.1.1}),
\begin{equation*}
\begin{split}
\left(\Lt+\left(\tfrac{n}{2}+r\right)\vp\right)\a^0A&=\a^0\left(\Lt+\left(\tfrac{n}{2}+r\right)\vp\right)A+P\a^0A\\
&=\a^0A\left(\Lt+\left(\tfrac{n}{2}-r\right)\vp\right)+P\a^0A .
\end{split}
\end{equation*}
The first equation (\ref{1st}) now follows by (\ref{l3.1.2}) and (\ref{l3.1.3}) and the second equation (\ref{2nd}) follows similarly.
\end{proof}
\noindent Let $\f\in V(j,\e)$. Then the action of the conformal factor $w_1$ can be written as a sum of the adjacent $\spin(n)$-types determined by the tensor product decomposition (selection rule) (\cite{BO:06}) 
\begin{equation}\label{select}
V(1,0,\ldots 0)\otimes V(j,\e)\cong_{\spin(n)}=V(j+1,\e)\oplus V(j-1,\e)\oplus V(f,-\e).
\end{equation}
Thus we get
\begin{equation*}
w_1\f=\f_{j+1}+\f_{j-1}+\f_{-\e}\in V(j+1,\e)\oplus V(j-1,\e)\oplus V(j,-\e).
\end{equation*}
Similarly, for $e^{\sqrt{-1}ft}\in (f)$,
\begin{equation*}
w_0e^{\sqrt{-1}ft}=\tfrac12 e^{\sqrt{-1}(f+1)t}+\tfrac12 e^{\sqrt{-1}(f-1)t}\in (f+1)\oplus (f-1).
\end{equation*}
So, in our case, for $\Phi\in V(f,j,\X,\e)=:V_O$, we have
\begin{equation}
\vp\Phi=\Phi_{NW}+\Phi_{NE}+\Phi_{W}+\Phi_{E}+\Phi_{SW}+\Phi_{SE},\\
\end{equation}
where
$$
\begin{array}{ll}
\Phi_{NW}\in V(f-1,j+1,\X,\e)=:V_{NW},&\Phi_{NE}\in V(f+1,j+1,\X,\e)=:V_{NE},\\
\Phi_{W}\in V(f-1,j,\X,-\e)=:V_{W},&\Phi_{E}\in V(f+1,j,\X,-\e)=:V_{E},\\
\Phi_{SW}\in V(f-1,j-1,\X,\e)=:V_{SW},&\Phi_{SE}\in V(f+1,j-1,\X,\e)=:V_{SE}.  
\end{array}
$$
We apply the relations in the Lemma 3.1 to $\Phi\in V_O$ and project them into each of the six summands in the above.
Let $V'$ be one of the possible six target spaces. Then
\begin{equation}\label{proj1}
\begin{split}
&\proj_{V'}[N,\vp]\Phi=N\proj_{V'}\vp\Phi-\proj_{V'}\vp N\Phi\\
&\quad =(N(V')-N(V_o))\proj_{V'}\vp\Phi=(f'^2-f^2+J'^2-J^2)\proj_{V'}\vp\Phi,
\end{split}
\end{equation}
where $N(V')$ and $N(V_O)$ are $N$ evaluated at $V'$ and $V_O$, respectively and $J'^2$ and $J^2$ are $D^2$ evaluated at $V'$ and $V_O$, respectively. Here we use the Lichnerowicz formula (\cite{Lich:63}) $D^2=\N^*\N_{\s1}+\tfrac{(n-1)(n-2)}{4}$ to compute $[\N^*\N_{\s1},w_1]$ part.
\\
We also compute
\begin{equation}\label{proj2}
\begin{split}
&\proj_{V'}P\Phi=\proj_{V'}\sin t\sin\r\a^0\a^1\Phi=-\X\sqrt{-1}\proj_{V'}[\pt,[D,\vp]]\Phi\\
&\quad=\X(f'-f)(\e'J'-\e J)\proj_{V'}\vp\Phi,
\end{split}
\end{equation}
where $\e' J'$ and $\e J$ are $D$ evaluated at $V'$ and $V_O$, respectively.

\begin{lemma}\label{transition} Let $\m(V)$ and $\nu(V)$ be the eigenvalues of $\a^0A$ and $A\a^0$ on $V$ type $K=\spin(2)\times\spin(n)$ finite space, respectively. Then, with respect to the diagrams
\begin{equation*}
\setlength{\extrarowheight}{7pt}
\left(\begin{array}{cc}
\frac{\m(V_{NW})}{\m(V_O)}&\frac{\m(V_{NE})}{\m(V_O)}\\
\frac{\m(V_{W})}{\m(V_O)}&\frac{\m(V_{E})}{\m(V_O)}\\
\frac{\m(V_{SW})}{\m(V_O)}&\frac{\m(V_{SE})}{\m(V_O)}\\
\end{array}\right)\text{ and }
\left(\begin{array}{cc}
\frac{\nu(V_{NW})}{\nu(V_O)}&\frac{\nu(V_{NE})}{\nu(V_O)}\\
\frac{\nu(V_{W})}{\nu(V_O)}&\frac{\nu(V_{E})}{\nu(V_O)}\\
\frac{\nu(V_{SW})}{\nu(V_O)}&\frac{\nu(V_{SE})}{\nu(V_O)}\\
\end{array}\right),
\end{equation*}
we get
\begin{equation*}
\setlength{\extrarowheight}{7pt}
\left(\begin{array}{cc}
\frac{-f+J+1+r+\Xi\e/2}{-f+J+1-r-\Xi\e/2}&\frac{f+J+1+r-\Xi\e/2}{f+J+1-r+\Xi\e/2}\\
\frac{-f+1/2+r-\Xi\e J}{-f+1/2-r+\Xi\e J}&\frac{f+1/2+r+\Xi\e J}{f+1/2-r-\Xi\e J}\\
\frac{-f-J+1+r-\Xi\e/2}{-f-J+1-r+\Xi\e/2}&\frac{f-J+1+r+\Xi\e/2}{f-J+1-r-\Xi\e/2}
\end{array}\right)\text{ and }
\left(\begin{array}{cc}
\frac{-f+J+1+r-\Xi\e/2}{-f+J+1-r+\Xi\e/2}&\frac{f+J+1+r+\Xi\e/2}{f+J+1-r-\Xi\e/2}\\
\frac{-f+1/2+r+\Xi\e J}{-f+1/2-r-\Xi\e J}&\frac{f+1/2+r-\Xi\e J}{f+1/2-r+\Xi\e J}\\
\frac{-f-J+1+r+\Xi\e/2}{-f-J+1-r-\Xi\e/2}&\frac{f-J+1+r-\Xi\e/2}{f-J+1-r+\Xi\e/2}
\end{array}\right).
\end{equation*}
\end{lemma}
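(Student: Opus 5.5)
The plan is to apply the relations (\ref{1st}) and (\ref{2nd}) of Lemma \ref{comp-rel} to $\Phi\in V_O$, project onto each of the six target spaces $V'$, and read off the eigenvalue ratios. Since $\a^0A$ preserves $\Xi$-chirality and commutes with the $K$-action, it acts by the scalar $\m(V)$ on each $V(f,j,\Xi,\e)$. Projecting (\ref{1st}) onto $V'$ therefore gives
\begin{equation*}
\m(V')\,\proj_{V'}([N,\vp]+P-2r\vp)\Phi=\m(V_O)\,\proj_{V'}([N,\vp]-P+2r\vp)\Phi .
\end{equation*}
By (\ref{proj1}) and (\ref{proj2}), each of $[N,\vp]$, $P$ and $\vp$ acts on $\proj_{V'}\vp\Phi$ by an explicit scalar. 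Wherever $\proj_{V'}\vp\Phi\neq0$, we obtain
\begin{equation*}
\frac{\m(V')}{\m(V_O)}=\frac{\Delta N-\Xi(f'-f)(\e'J'-\e J)+2r}{\Delta N+\Xi(f'-f)(\e'J'-\e J)-2r},\qquad \Delta N=f'^2-f^2+J'^2-J^2 .
\end{equation*}
For $\nu$ and (\ref{2nd}) the same formula holds with the sign of $P$ reversed, i.e.\ with $\Xi$ replaced by $-\Xi$. This is consistent with the two displayed matrices differing only by $\Xi\mapsto-\Xi$.

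Next I will substitute the six cases. For NW, $f'=f-1$, $J'=J+1$, $\e'=\e$, so $\Delta N=-2f+1+2J+1=2(-f+J+1)$ and $(f'-f)(\e'J'-\e J)=-\e$. Numerator and denominator then become $2(-f+J+1+r+\Xi\e/2)$ and $2(-f+J+1-r-\Xi\e/2)$, which matches the entry after cancelling the 2. NE, SW and SE are identical computations with $f'=f\pm1$, $J'=J\pm1$. For W and E, $J'=J$ but $\e'=-\e$, so $\e'J'-\e J=-2\e J$. Then for W, $\Delta N=-2f+1$ and the $P$-term is $\Xi(-1)(-2\e J)=2\Xi\e J$, giving $\frac{-f+1/2+r-\Xi\e J}{-f+1/2-r+\Xi\e J}$, as claimed. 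E follows in the same way.

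The main obstacle is justifying (\ref{proj2}), in particular the sign conventions. It requires that $P=\sin t\sin\r\,\a^0\a^1$ acting on $\Xi$-spinors of the form $\begin{pmatrix}\frac{\Xi}{\eye}\f\\\f\end{pmatrix}$ equals $-\Xi\eye[\pt,[D,\vp]]$. I would verify this directly. First, $[\pt,w_0]=-\sin t$. Second, $[D,w_1]=\g(dw_1)=-\sin\r\,\g^1$. Then $\a^0\a^1$ acts on the chirality pair through $\g^1$ together with the off-diagonal swap, producing the factor $\Xi/\eye$. One also has to note that $D$ evaluated at $V'$ is $\e'J'$, which holds because $D$ acts as $\e J$ on $V(j,\e)$. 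A secondary point is that every component of $\vp\Phi$ in (\ref{select}) can be nonzero for generic $\Phi$, so the division is legitimate. Where a projection vanishes, the relation is vacuous, and the formula is read as the generic transition rule.
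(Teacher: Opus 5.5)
Your proposal is correct and follows the paper's route: the paper's proof is only the remark that the entries follow by straightforward computation from Lemma~\ref{comp-rel}, (\ref{proj1}) and (\ref{proj2}), and your general ratio formula, its $\Xi\mapsto-\Xi$ variant for $\nu$, and the six substitutions reproduce every entry of both matrices. Your direct check of (\ref{proj2}) and your remark about nonvanishing projections go slightly beyond what the paper writes out, but they do not change the argument.
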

\begin{proof}
It is a consequence of straightforward computations using Lemma \ref{comp-rel}, (\ref{proj1}) and (\ref{proj2}).
\end{proof}
\begin{theorem} On $V(f,j,\X,\e)$ the opeartor $\a^0A$ acts as a constant multiple of
\begin{equation}\label{al}
Z_L(r,f,j,\Xi,\e)=\Xi\e\cdot\textstyle\frac{\G\left(\frac12(f+J+1+r-\Xi\e/2)\right)\G\left(\frac12(-f+J+1+r+\Xi\e/2)\right)}{\G\left(\frac12(f+J+1-r+\Xi\e/2)\right)\G\left(\frac12(-f+J+1-r-\Xi\e/2)\right)}
\end{equation}
and $A\a^0$ acts as a constant multiple of 
\begin{equation}\label{Rop}
\begin{split}
Z_R(r,f,j,\Xi,\e)&=\Xi\e\cdot\textstyle\frac{\G\left(\frac12(f+J+1+r+\Xi\e/2)\right)\G\left(\frac12(-f+J+1+r-\Xi\e/2)\right)}{\G\left(\frac12(f+J+1-r-\Xi\e/2)\right)\G\left(\frac12(-f+J+1-r+\Xi\e/2)\right)}\\
&=-Z_L(r,f,j,\X,-\e).
\end{split}
\end{equation}
\end{theorem}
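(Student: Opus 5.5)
The plan is to verify that $Z_L$ and $Z_R$ satisfy the ratio relations of Lemma \ref{transition}, and then use connectivity of the $K$-type graph to conclude that the eigenvalues are determined up to a global constant. The spectrum generating argument runs as follows. Since $\vp$ maps $V_O$ into the six neighbouring $K$-types with (generically) nonzero components, the eigenvalue $\m$ of $\a^0A$ is determined on every $K$-type reachable from a given one. Here $V(f,j,\X,\e)$ with $f\in\tfrac12+\mathbb{Z}$, $j\ge0$, $\e=\pm1$ and $\X$ fixed is reachable via the NW, NE, W, E, SW, SE moves. Hence any two functions satisfying the transition ratios of Lemma \ref{transition} agree up to a constant multiple (possibly one constant per connected component). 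So it suffices to check that $Z_L$ satisfies the $\m$-ratios and $Z_R$ satisfies the $\nu$-ratios.

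For the diagonal moves, $j\mapsto j\pm1$ changes $J$ by $\pm1$ and $f\mapsto f\pm1$, with $\e$ unchanged. Consider NE: $(f,J)\mapsto(f+1,J+1)$. The first Gamma argument $\tfrac12(f+J+1+r-\X\e/2)$ increases by $1$, while the second, $\tfrac12(-f+J+1+r+\X\e/2)$, is unchanged. By $\G(x+1)=x\G(x)$, the ratio is
\[
\frac{f+J+1+r-\X\e/2}{f+J+1-r+\X\e/2},
\]
after the factors of $\tfrac12$ cancel. This matches the NE entry of Lemma \ref{transition}. The NW, SW and SE moves are handled the same way: they shift one of $\pm f+J$ by $\pm2$ and leave the other fixed. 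For SW and SE one uses $\G(x-1)=\G(x)/(x-1)$, and I will check that the resulting expressions agree with the table entries.

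The horizontal moves W and E are the main obstacle, because $\e\mapsto-\e$ flips the sign of $\X\e/2$ while $J$ stays fixed. Consider E: $f\mapsto f+1$, $\e\mapsto-\e$. The first argument becomes $\tfrac12(f+J+2+r+\X\e/2)$, which exceeds the old one by $\tfrac12(1+\X\e)$. The second becomes $\tfrac12(-f+J+r-\X\e/2)$, which differs from the old one by $-\tfrac12(1+\X\e)$. The analysis therefore splits into the cases $\X\e=1$ and $\X\e=-1$.

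In the case $\X\e=-1$, both Gamma arguments are unchanged in both numerator and denominator, so the ratio is just the sign change from the prefactor, namely $-1$. The table entry gives
\[
\frac{f+1/2+r-J}{f+1/2-r+J},
\]
which is not $-1$ in general. I therefore need to recheck the bookkeeping: on $V_E$ the $D$-eigenvalue is $-\e J$, so the prefactor and the parity structure should really be traced through $\e'J'$. If the naive check fails, I will instead treat the $\e$-flip as a two-step composite of diagonal moves (E = NE followed by an SE-type move with $\e$ flipped) to locate where the ratio comes from. I will also use $\G(x)\G(1-x)=\pi/\sin\pi x$ to convert the mismatch into a parity-dependent constant, which can be absorbed into the ``constant multiple'' on each $\e$-component.

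In the case $\X\e=1$, the numerator gains the factor $\tfrac12(f+J+1+r-\tfrac12)$ and loses one in the second Gamma. Combined with the sign $-1$ from the prefactor, this should reproduce $(f+\tfrac12+r+J)/(f+\tfrac12-r-J)$ up to the same reflection adjustment.

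Finally, the identity $Z_R(r,f,j,\X,\e)=-Z_L(r,f,j,\X,-\e)$ follows by inspection: replacing $\e$ by $-\e$ in \eqref{al} flips $\X\e/2$ in every Gamma argument, which yields exactly the Gamma factors of \eqref{Rop}, while the prefactor $\X\e$ changes sign. The same ratio verification then applies to $Z_R$, since the $\nu$-table is the $\m$-table with $\e\mapsto-\e$.
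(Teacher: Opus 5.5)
Your strategy matches the paper's: show that $Z_L$ and $Z_R$ satisfy the ratio tables of Lemma~\ref{transition}, then use connectivity of the $K$-type graph. But the step you single out as the main obstacle is miscomputed. As written, your computation finds a mismatch in the E-ratio when $\X\e=-1$ and leaves it unresolved; if that mismatch were real, it would refute the theorem. The error is that under E ($f\mapsto f+1$, $\e\mapsto-\e$, $J$ fixed) you tracked only the numerator Gamma arguments. The denominator arguments also move, by $+\tfrac12(1-\X\e)$ and $-\tfrac12(1-\X\e)$. For $\X\e=-1$ the numerator is indeed unchanged, but $b_1=\tfrac12(f+J-r+\tfrac12)$ goes to $b_1+1$ and $b_2=\tfrac12(-f+J-r+\tfrac32)$ goes to $b_2-1$, so
\[
\frac{\m(V_E)}{\m(V_O)}=-\frac{b_2-1}{b_1}=-\frac{-f+J-r-\tfrac12}{f+J-r+\tfrac12}=\frac{f+\tfrac12+r-J}{f+\tfrac12-r+J},
\]
which is exactly the table entry. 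For $\X\e=1$ the denominators are fixed and you get $-\frac{f+J+r+1/2}{-f+J+r-1/2}=\frac{f+1/2+r+J}{f+1/2-r-J}$, again exactly the entry, with no ``reflection adjustment.'' W works the same way: the numerator arguments shift by $\mp\tfrac12(1-\X\e)$ and the denominator arguments by $\mp\tfrac12(1+\X\e)$. With this corrected, your diagonal checks and your reduction of $Z_R$ and the $\nu$-table to the case $\e\mapsto-\e$ complete the verification.

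The fallback you describe could not have worked. A discrepancy between $-1$ and a non-constant rational function of $(f,J)$ is not a constant, so no use of $\G(x)\G(1-x)=\pi/\sin\pi x$ can absorb it. In any case E and W themselves connect $\e$ to $-\e$, so there is no separate constant on each $\e$-component. Relatedly, it is not true that all $V(f,j,\X,\e)$ with $\X$ fixed are reachable from one another. Every move preserves the parity of $f+j+\e/2$, so there are exactly two components, $V^{\text{odd}}$ and $V^{\text{even}}$, and ``constant multiple'' means one constant on each. That is why the paper normalizes at both $V(\tfrac12,0,1,+1)$ and $V(\tfrac12,0,1,-1)$, which lie in different components.
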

\begin{proof}
We use Lemma \ref{transition} and the symmetry
\begin{equation*}
\m\text{ at }V(\tfrac12,0,\X,\e)=-\m\text{ at }V(-\tfrac12,0,\X,-\e)
\end{equation*}
together with a normalization at $V(\tfrac12,0,1,\pm 1)$. It is clear from the symmetry that $Z_R(r,f,j,\X,\e)=-Z_L(r,f,j,\X,-\e)$.
\end{proof}
\begin{remark}
The operator $A$ changes chirality but $A^2=A\a^0\cdot\a^0A$ acts as a constant multiple of 
\begin{equation*}
Z_L(r,f,j,\Xi,\e)\cdot Z_R(r,f,j,\Xi,\e) \text{ on }V(f,j,\X,\e).
\end{equation*}
For instant, if $r=\frac12$, on $V(f,j,\X,\e)$,
\begin{align*}
&Z_L(\tfrac12,f,j,\Xi,\e)\cdot Z_R(\tfrac12,f,j,\Xi,\e)=\tfrac14(-f^2+J^2) \text{ and }\\
&\dc^2 \text{ acts as }-f^2+J^2 \text{ by (\ref{dirac2})}.
\end{align*}
\end{remark}
\begin{remark}
The $K=\spin(2)\times\spin(n)$ decomposition of $\X$-spinors has two $\gg=\gso(2,n)=\mathfrak{k}\oplus\mathfrak{s}$ invariant subspaces (\cite{BO:06}) due to the selection rule (\ref{select})
\begin{equation*}
V^{\text{odd}}=\bigoplus_{\substack{\scriptstyle f+j+\e/2\\\text{ odd}}}V(f,j,\X,\e)\quad\text{and}\quad V^{\text{even}}=\bigoplus_{\substack{\scriptstyle f+j+\e/2\\\text{ even}}}V(f,j,\X,\e),
\end{equation*}
where $\mathfrak{k}=\gso(2)\times\gso(n)$ and $\mathfrak{s}$ is the Cartan complement of $\mathfrak{k}$ so that
\begin{equation*}
\bigoplus_{\substack{\scriptstyle f\in 1/2+\mathbb{Z}\\j\in\mathbb{N}}}V(f,j,\X,\e)=V^{\text{odd}}\oplus V^{\text{even}}.
\end{equation*}
\end{remark}
\noindent 
Now we consider all odd order operators $\a^0A$ with $r=k+\tfrac12$, $k\in\mathbb{N}$. The operator $\a^0A$ acts as a constant multiple of (\ref{al}) and in this case,
\begin{align}\label{zl}
Z_L(k+\tfrac12,f,j,\Xi,\e)&=\Xi\e\cdot\left(\tfrac{1}{2}\right)^{2k+1}\cdot\prod_{l=0}^{k-\frac12-\frac{\Xi\e}{2}}(f+J-(k-\tfrac12-\tfrac{\Xi\e}{2})+2l)\nonumber\\&\cdot\prod_{m=0}^{k-\frac12+\frac{\Xi\e}{2}}(-f+J-(k-\tfrac12+\tfrac{\Xi\e}{2})+2m) .
\end{align}
Note that the second product in the above equation (\ref{zl}) can be written by symmetry  as 
\begin{equation}\label{2ndprod}
\begin{split}
&\prod_{m=0}^{k-\frac12+\frac{\Xi\e}{2}}(-f+J-(k-\tfrac12+\tfrac{\Xi\e}{2})+2m)\\
&\quad\quad =(-1)^{k+\tfrac12+\tfrac{\X\e}2}\prod_{m=0}^{k-\frac12+\frac{\Xi\e}{2}}(f-J-(k-\tfrac12+\tfrac{\Xi\e}{2})+2m).
\end{split}
\end{equation}
On the other hand, from (\ref{dirac}), we have
\begin{equation*}
\a^0\dc=\pt\cdot I+D\cdot\left(\begin{array}{cc}0&-1\\1&0\end{array}\right)\text{ and }\dc\a^0=\pt\cdot I-D\cdot\left(\begin{array}{cc}0&-1\\1&0\end{array}\right)
\end{equation*}
implying that, on $V(f,j,\X,\e)$,
\begin{equation}\label{LRD}
-\sqrt{-1}\a^0\dc\text{ acts as }f-\X\e J\text{ and }-\sqrt{-1}\dc\a^0\text{ acts as }f+\X\e J.
\end{equation}
Define
\begin{equation*}
Z(2k+1)=\prod_{l=0}^{k}(-\sqrt{-1}\a^0\dc-k+2l)\cdot\prod_{m=0}^{k-1}(-\sqrt{-1}\dc\a^0-(k-1)+2m) .
\end{equation*}
Then, by (\ref{2ndprod}) and (\ref{LRD}),
\begin{equation*}
Z(2k+1)=(-1)^{k+1}2^{2k+1}\cdot Z_L(k+\tfrac12,f,j,\X,\e).
\end{equation*}
The operator $Z(2k+1)$ satisfies the first equation of the Lemma \ref{comp-rel} of order $2r=2k+1$.
\begin{theorem} 
The order $2k+1$ conformally covariant differential operator $A_{2k+1}$ on the spinor bundle over $M$,
\begin{equation*}
\begin{split}
&A_{2k+1}=\sqrt{-1}\a^0\cdot Z(2k+1)\\
&\quad=\sqrt{-1}\a^0\cdot\prod_{l=0}^{k}(-\sqrt{-1}\a^0\dc-k+2l)\cdot\prod_{m=0}^{k-1}(-\sqrt{-1}\dc\a^0-(k-1)+2m),
\end{split}
\end{equation*}
has the leading term $(-1)^k\cdot\dc^{2k+1}$. And the square of $A_{2k+1}$ acts as
\begin{equation*}
\prod_{l=-k}^k (f+J+l)(-f+J+l)\text{ on }V(f,j,\X,\e).
\end{equation*}
\end{theorem}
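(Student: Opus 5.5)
The theorem makes three claims: conformal covariance, the leading term, and the spectrum of $A_{2k+1}^2$. I would prove them in that order.

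\emph{Covariance.} Since $\a^0$ is invertible with $(\a^0)^2=-\id$, relation (\ref{1st}) for $\a^0A$ is equivalent to the intertwining relation (\ref{int-rel}) for $A$. This is the argument of Lemma \ref{comp-rel} run backwards, using (\ref{l3.1.1})--(\ref{l3.1.3}). So it suffices that $Z(2k+1)$ satisfies (\ref{1st}) with $r=k+\frac12$; then $A_{2k+1}=-\eye(\a^0)^{-1}Z(2k+1)$ satisfies (\ref{int-rel}). The operator $Z(2k+1)$ is a polynomial in $\a^0\dc$ and $\dc\a^0$, so it preserves chirality and each $V(f,j,\X,\e)$. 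By (\ref{LRD}) it acts there by the scalar $(-1)^{k+1}2^{2k+1}Z_L(k+\frac12,f,j,\X,\e)$. Relation (\ref{1st}) is first order in $\vp$, and by (\ref{proj1}) and (\ref{proj2}) both sides can be projected onto the six neighbours $V'$ of $V_O$. Then (\ref{1st}) holds exactly when the eigenvalue ratios $\m(V')/\m(V_O)$ equal the entries of Lemma \ref{transition} wherever $\proj_{V'}\vp\Phi\ne0$. The function $Z_L$ is built from Gamma functions, so its ratios obey these same ratios through $\G(x+1)=x\G(x)$; this is exactly how (\ref{al}) was derived.

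One point needs care here: the scalar function must be compatible with the transition relations at the boundary, where numerators or denominators vanish and $V'$ may be absent. Since $Z(2k+1)$ is an honest differential operator, its eigenvalues are polynomials, given by (\ref{zl}). I would verify the six ratio identities directly from the product (\ref{zl}). Each shift $f\to f\pm1$, $J\to J\pm1$, or $\e\to-\e$ with $J$ fixed (the last swapping the lengths of the two products) telescopes the products to leave exactly the listed linear fraction. I would multiply these identities out so they hold even where a factor is zero. This is the main obstacle, especially the $W$ and $E$ entries, where $\e$ flips and the two products trade a factor. I would also check the claim, stated just before the theorem, that $Z(2k+1)$ satisfies (\ref{1st}).

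\emph{Leading term.} In both $-\eye\a^0\dc$ and $-\eye\dc\a^0$ the top-order part is first order. The product $Z(2k+1)$ has $k+1$ factors of the first kind and $k$ of the second, so its leading symbol is $(-\eye)^{2k+1}(\a^0\dc)^{k+1}(\dc\a^0)^k$ at the symbol level. Regroup this as $\a^0\dc\,(\a^0\dc\dc\a^0)^k$, and note that $\dc\dc=\dc^2$ is scalar to top order (indeed $\dc^2=\pt^2+D^2$ by (\ref{dirac2})) and commutes with $\a^0$ in symbol. Together with $(\a^0)^2=-1$, this gives $\a^0\dc\dc\a^0\sim-\dc^2$. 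Multiplying by $\eye\a^0$ and collecting signs, $\eye\a^0\cdot(-\eye)^{2k+1}\a^0\dc(-\dc^2)^k$ reduces to $(-1)^k\dc^{2k+1}$. I would double-check the sign by the case $k=0$, where $A_1=\eye\a^0(-\eye\a^0\dc)=-\dc$. That sign disagrees with the stated $(-1)^0\dc$, so the factor conventions need a careful recheck; this is a minor bookkeeping step.

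\emph{Square.} By the Remark above, $A^2=A\a^0\cdot\a^0A$. The operator $A_{2k+1}\a^0$ acts as a multiple of $Z_R$, and $\a^0A_{2k+1}$ acts as a multiple of $Z_L$, with $Z_R(r,f,j,\X,\e)=-Z_L(r,f,j,\X,-\e)$. Alternatively, compute directly: conjugating by $\a^0$ exchanges $\a^0\dc$ and $\dc\a^0$. Hence $A_{2k+1}^2$ acts by the product of the polynomials $\prod_{l=0}^k(f-\X\e J-k+2l)\prod_{m=0}^{k-1}(f+\X\e J-k+1+2m)$ and the same expression with $\X\e\to-\X\e$, up to sign. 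The step-2 progressions interleave:
\begin{itemize}
\item shifts $-k,-k+2,\dots,k$ of $f-\X\e J$ with shifts $-k+1,\dots,k-1$ of $f-\X\e J$ give $\prod_{l=-k}^k(f-\X\e J+l)$;
\item similarly for $f+\X\e J$.
\end{itemize}
Up to sign these are $\prod_{l=-k}^{k}(f+J+l)(-f+J+l)$, using $\prod_{l=-k}^k(x+l)=(-1)^{2k+1}\prod_{l=-k}^k(-x+l)$. I would fix the overall sign by $k=0$, where the result is $-f^2+J^2$, matching (\ref{dirac2}).
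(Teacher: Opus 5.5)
Your overall plan is the paper's. Covariance is reduced to $Z(2k+1)$ satisfying (\ref{1st}). The leading term comes from regrouping $(\a^0\dc)^{k+1}(\dc\a^0)^k$, and the square from interleaving two step-$2$ progressions.

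Two parts of your plan are sound and slightly improve on the paper. First, you check the six transition ratios as cross-multiplied polynomial identities from (\ref{zl}); the paper instead just infers (\ref{1st}) from $Z(2k+1)=(-1)^{k+1}2^{2k+1}Z_L$. Second, for the square you conjugate by $\a^0$ to swap $\a^0\dc$ and $\dc\a^0$, then read eigenvalues off (\ref{LRD}). This is more elementary than the paper's route through $A\a^0\cdot\a^0A$, $Z_L$, $Z_R$ and (\ref{Rop}).

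The genuine error is your premise $(\a^0)^2=-\id$. The metric is Lorentzian, so $g_{00}=-1$, and $\a_0=\begin{pmatrix}0&1\\1&0\end{pmatrix}$; hence $(\a^0)^2=+\id$. The paper uses this, for instance when it derives $\a^0\dc=\pt\cdot I+D\cdot\begin{pmatrix}0&-1\\1&0\end{pmatrix}$ from (\ref{dirac}). In the covariance step the wrong sign does no harm, since (\ref{int-rel}) is linear in $A$.

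In the leading-term and square computations the wrong sign does matter.

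\emph{Leading term.} Under your premise, the expression you wrote, $\eye\a^0(-\eye)^{2k+1}\a^0\dc(-\dc^2)^k$, equals $-\dc^{2k+1}$ for every $k$, not $(-1)^k\dc^{2k+1}$. That is exactly why your $k=0$ check gave $A_1=-\dc$. So the discrepancy you flagged is not bookkeeping: as written, your argument contradicts the theorem for every even $k$. With $(\a^0)^2=+\id$ it closes at once. First, $A_1=\eye\a^0(-\eye\a^0\dc)=\dc$. In general the top-order part is $(-1)^k\a^0(\a^0\dc)^{k+1}(\dc\a^0)^k=(-1)^k\dc\,(\a^0\dc^2\a^0)^k=(-1)^k\dc^{2k+1}$. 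The last step holds because $\dc^2=(\pt^2+D^2)\id$ by (\ref{dirac2}) commutes with $\a^0$ exactly, not just at the symbol level.

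\emph{Square.} Saying ``up to sign, fixed at $k=0$'' does not prove a $k$-dependent identity. However, the sign is forced once $(\a^0)^2=+\id$. We have $A_{2k+1}^2=-\a^0Z\a^0Z=-\tilde Z Z$, where $\tilde Z$ is $Z(2k+1)$ with $\a^0\dc$ and $\dc\a^0$ interchanged. Your interleaving then gives $\tilde ZZ=\prod_{l=-k}^k(f-\X\e J+l)(f+\X\e J+l)=-\prod_{l=-k}^k(f+J+l)(-f+J+l)$. The minus sign comes from reflecting an odd number, $2k+1$, of factors. Hence $A_{2k+1}^2$ acts by exactly $\prod_{l=-k}^k(f+J+l)(-f+J+l)$, as stated.
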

\begin{proof}
Since $Z(2k+1)$ satisfies (\ref{1st}), it is clear that $A(2k+1)$ satisfies the intertwining relation (\ref{int-rel}) with $2r=2k+1$. Thus it is a conformally covariant differential operator of order $2k+1$. For the leading term of $A_{2k+1}$, we note that 
\begin{equation*}
\begin{split}
&(-1)^k\a^0(\a^0\dc)^{k+1}(\dc\a^0)^k=(-1)^k\dc(\a^0\dc)^k(\dc\a^0)^k\\
&\quad =(-1)^k\cdot\dc^{2k+1}, \text{ since }\a^0\dc\text{ and }\dc\a^0 \text{ commute}.
\end{split}
\end{equation*}
Finally, 
\begin{equation*}
\begin{split}
&A_{2k+1}^2=\a^0A_{2k+1}A_{2k+1}\a^0=-2^{4k+2}Z_L(k+\tfrac12,f,j,\X.\e)Z_R(k+\tfrac12.f,j,\X,\e)\\
&\quad =2^{4k+2}Z_L(k+\tfrac12,f,j,\X.\e)Z_L(k+\tfrac12,f,j,\X,-\e)\text{ by (\ref{Rop})}\\
&=\prod_{l=0}^{k-\frac12-\frac{\Xi\e}{2}}(f+J-(k-\tfrac12-\tfrac{\Xi\e}{2})+2l)\prod_{m=0}^{k-\frac12+\frac{\Xi\e}{2}}(-f+J-(k-\tfrac12+\tfrac{\Xi\e}{2})+2m)\\
&\cdot\prod_{l=0}^{k-\frac12+\frac{\Xi\e}{2}}(f+J-(k-\tfrac12+\tfrac{\Xi\e}{2})+2l)\prod_{m=0}^{k-\frac12-\frac{\Xi\e}{2}}(-f+J-(k-\tfrac12-\tfrac{\Xi\e}{2})+2m)\\
&=\prod_{l=-k}^k (f+J+l)(-f+J+l).
\end{split}
\end{equation*}
\end{proof}
\noindent The null space $\cN(A_{2k+1})=\cN(Z(2k+1))$ of $A_{2k+1}$ is an invariant subspace for the $(\gg,K)$ representation $(U_{-k-1/2},u_{-k-1/2})$, where $u_{-k-1/2}$ acts on the spinor $\F$ as 
\begin{equation*}
u_{-k-1/2}(h)\F=\W^{-k-\tfrac12+\tfrac n2}_h h\cdot\F
\end{equation*}
for all conformal transformation $h:M\to M$ with $h\cdot g=\W_h^2 g$. Here $h\cdot$ is the natural action of $h$ on tensor-spinors. So, $h\cdot g=(h^{-1})^*g$ and $h\cdot\F=\F\circ h$. And $U_{-k-1/2}$ acts as
\begin{equation*}
U_{-k-1/2}(X)\F=\left(\cL_X+(-k-\tfrac12+\tfrac n2)\vp_X\right)\F
\end{equation*}
for all conformal vector field $X$ with conformal factor $W_X$. The operator $A_{2k+1}$ intertwines between the representations $(U_{-k-1/2},u_{-k-1.2})$ and $(U_{k+1/2},u_{k+1.2})$ (\cite{Branson:87}).
\\
The selection rule (\ref{select}) implies that $\cN(A_{2k+1})\subset V^{\text{odd}}$ if $(-1)^{k-n/2}=-1$ and $\cN(A_{2k+1})\subset V^{\text{even}}$ if $(-1)^{k-n/2}=1$.
\\
We define the spaces of generalized positive and negative frequency waves,
\begin{align*}
&W^+_{2k+1}=\mkern-40mu\bigoplus_{f=J-\left(k-\tfrac12+\tfrac{\X\e}2\right)+2m}\mkern-40mu V(f,j,\X,\e)\text{ for }m=0,\ldots, k-\tfrac12+\tfrac{\X\e}2\text{ and }\\
&W^-_{2k+1}=\mkern-40mu\bigoplus_{-f=J-\left(k-\tfrac12-\tfrac{\X\e}{2}\right)+2l}\mkern-40mu V(f,j,\X,\e)\text{ for }l=0,\ldots, k-\tfrac12-\tfrac{\X\e}2.
\end{align*}
The selection rule also shows that these are $(\gg,K)$ invariant. The intersection $F:=W^+_{2k+1}\cap W^-_{2k+1}$ is also invariant and nonempty if $k\ge\tfrac n2$. If $k=\tfrac n2$, we have $F=V(-\tfrac{\X\e}2,0,\X,\e)$.


\vspace{1cm}
\noindent Doojin Hong\\
Department of Mathematics\\
University of North Dakota\\
Grand Forks, ND 58202, USA\\
Email: doojin.hong@und.edu
\end{document}